\documentclass[12pt,twoside,final,psamsfonts]{amsart}

\usepackage[psamsfonts]{amssymb}
\usepackage{times,a4wide}
\usepackage{MnSymbol}

\usepackage{color}		
\usepackage{epsfig}

\usepackage{graphicx}

\addtolength{\textheight}{1.5truecm}

\addtolength\headheight{4pt}
\setlength\parskip{5pt}
\newcommand{\nc}{\newcommand}

\numberwithin{equation}{section}
\theoremstyle{plain}

\newtheorem{theorem}[equation]{Theorem}

\theoremstyle{definition}

\newcommand{\C}{{\mathbb C}}
\newcommand{\N}{{\mathbb N}}
\newcommand{\D}{{\mathbb D}}
\newcommand{\R}{{\mathbb R}}

\newcommand{\T}{{\mathbb T}} 

\newcommand{\lra}{\longrightarrow}

\newcommand{\lmto}{\longmapsto}
\newcommand{\eps}{\varepsilon}
\newcommand{\vp}{\varphi}
\nc{\bea}{\begin{eqnarray}}
\nc{\eea}{\end{eqnarray}}
\nc{\beqa}{\begin{eqnarray*}}
\nc{\eeqa}{\end{eqnarray*}}
\nc{\Hi}{H^{\infty}}
\nc{\loi}{\ell^{\infty}}
\nc{\NL}{N^+\vert \Lambda}
\nc{\hf}{{\mathcal H}_{\phi}}
\nc{\liL}{\lambda\in\Lambda}
\nc{\nn}{\nonumber}
\nc{\dst}{\displaystyle}

\newenvironment{proof*}{\vskip 2mm\noindent {}}{$\blacksquare$ \vskip 2mm}
\numberwithin{equation}{section}

\renewcommand{\Re}{\mbox{Re}}
\renewcommand{\Im}{\mbox{Im}}

\title{Bad boundary behavior in star invariant subspaces II}

\author{Andreas Hartmann \& William T.\ Ross}

\address{Institut de Math\'ematiques de Bordeaux,
Universit\'e Bordeaux I, 351 cours de la Lib\'eration,
33405 Talence, France}

\address{Department of Mathematics, University of Richmond, VA 23173, USA}


\email{hartmann@math.u-bordeaux.fr, wross@richmond.edu}

\date{\today}

\keywords{Hardy spaces, star invariant subspaces, non-tangential limits, 
inner functions, unconditional sequences, generalized Carleson condition}

\subjclass{30J10, 30A12, 30A08}

\begin{document}

\bibliographystyle{amsalpha}

\begin{abstract}
We continue our study begun in  \cite{HR3} concerning the radial growth of functions in the model spaces $(I H^2)^{\perp}$.
\end{abstract}

\maketitle


\section{Introduction}

Suppose $I = B S_{\mu}$ is an inner function with Blaschke factor $B$, with zeros $\{\lambda_n\}_{n \geq 1}$ in the open unit disk $\D$ repeated according to multiplicity, and singular inner factor $S_{\mu}$ with associated positive singular measure $\mu$ on the unit circle $\T$. 
The following result was shown by Frostman in 1942 for Blaschke products
(see \cite{Frost} or \cite{CL}) and by Ahern-Clark for general
inner functions \cite[Lemma 3]{AC71}.

\begin{theorem}[Frostman, 1942; Ahern-Clark, 1971]\label{thm1.1}
Let $\zeta \in \T$ and $I$ be inner with $\mu(\{\zeta\}) = 0$. Then the following assertions are equivalent.
\begin{enumerate}
\item Every divisor of $I$ has a radial limit of modulus one at $\zeta$. 
\item Every divisor of $I$ has a radial limit at $\zeta$. 
\item The following condition holds
\bea\label{F1}
 \sum_{n \geq 1}\frac{1-|\lambda_n|}{|\zeta-\lambda_n|}+
 \int_{\T}\frac{1}{|\zeta-e^{it}|}d\mu(e^{it})<\infty.
\eea
\end{enumerate}
\end{theorem}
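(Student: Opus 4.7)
The plan is to close the cycle (1) $\Rightarrow$ (2) $\Rightarrow$ (3) $\Rightarrow$ (1); since (1) $\Rightarrow$ (2) is immediate, the content is in the other two implications. The key tools are the identity
$$ 1-|b_\lambda(z)|^2 \;=\; \frac{(1-|\lambda|^2)(1-|z|^2)}{|1-\bar\lambda z|^2}, $$
the elementary equality $|1-\bar\lambda\zeta|=|\zeta-\lambda|$ for $|\zeta|=1$, and the Poisson representation $-\log|S_\nu(z)|=P[\nu](z)$. Every divisor of $I$ has the form $B'S_\nu$ with $B'$ a Blaschke subproduct of $B$ and $0\le\nu\le\mu$; since $|B'(r\zeta)|\ge|B(r\zeta)|$ and $|S_\nu(r\zeta)|\ge|S_\mu(r\zeta)|$, for (3)$\Rightarrow$(1) it suffices to show $|I(r\zeta)|\to 1$ as $r\to 1^-$ and then invoke the standard fact that an inner function whose modulus tends radially to $1$ has a unimodular radial limit.

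For (3)$\Rightarrow$(1), the task therefore reduces to showing $-\log|B(r\zeta)|+P[\mu](r\zeta)\to 0$ as $r\to 1^-$. The central estimate, obtained by distinguishing whether $|\zeta-\lambda|$ exceeds $2(1-r)$ or not, is
$$ \frac{(1-|\lambda|)(1-r)}{|1-r\bar\lambda\zeta|^2}\;\le\;\frac{2(1-|\lambda|)}{|\zeta-\lambda|} $$
uniformly in $r\in(0,1)$, together with its integral analogue $\frac{1-r}{|e^{it}-r\zeta|^2}\le\frac{2}{|e^{it}-\zeta|}$. These provide a summable (respectively integrable) majorant by virtue of (3); since each individual summand and integrand tends to $0$ pointwise and $\mu(\{\zeta\})=0$, dominated convergence delivers $-\log|I(r\zeta)|\to 0$.

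For (2)$\Rightarrow$(3), I would argue by contraposition: assuming (3) fails, I construct a divisor of $I$ without a radial limit at $\zeta$. The guiding observation is that a single Blaschke factor $b_\lambda(r\zeta)$ traces a curve in $\D$ whose argument sweeps an angle of order $\frac{1-|\lambda|}{|\zeta-\lambda|}$ during its transit regime $1-r\asymp|\zeta-\lambda|$. If $\sum(1-|\lambda_n|)/|\zeta-\lambda_n|=\infty$, I would inductively select indices $n_k$ and radii $r_k\nearrow 1$ so that between $r_{k-1}$ and $r_k$ the argument of the partial subproduct $\prod_{j\le k}b_{\lambda_{n_j}}(r\zeta)$ swings by at least $\pi$, alternating direction with $k$; the resulting Blaschke divisor has no radial limit at $\zeta$. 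If instead the singular integral diverges, an analogous construction using the conjugate Poisson kernel and restrictions of $\mu$ to disjoint arcs accumulating at $\zeta$ produces a singular-inner divisor $S_\nu$ with oscillating argument.

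The principal obstacle is precisely this oscillation construction. One must simultaneously arrange that the argument contributions across successive selected blocks \emph{add} rather than cancel, that the chosen subsequence or sub-measure still defines a bona fide inner divisor of $I$, and that the behavior of the argument between the selected radii does not wash out the oscillation already secured at those radii. Exploiting divergence of the Frostman sum or integral to assemble an adequate ``argument budget'' while respecting the summability inherited from the Blaschke condition (respectively the finiteness of $\mu$) is the delicate point.
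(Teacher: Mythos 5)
The paper does not prove this theorem; it is quoted from Frostman and Ahern--Clark, so your attempt must stand on its own. The serious defect is in (3)$\Rightarrow$(1): the ``standard fact'' you invoke --- that an inner function whose modulus tends radially to $1$ must have a radial limit --- is false. Take $\zeta=1$ and $\mu=\sum_{n\ge 2}\frac{1}{n^{2}\log n}\,\delta_{e^{i/n}}$. A direct computation with the Poisson kernel gives $-\log|S_{\mu}(r)|\asymp 1/\log\frac{1}{1-r}\to 0$, so $|S_{\mu}(r)|\to 1$; but the continuous branch of $\arg S_{\mu}(r)$ is $\sum_{n}\frac{1}{n^{2}\log n}\,\frac{2r\sin(1/n)}{|e^{i/n}-r|^{2}}$, whose $n$th term increases to $\frac{1}{n^{2}\log n}\cot\frac{1}{2n}\asymp\frac{1}{n\log n}$, so by Fatou the argument tends to $+\infty$ and $S_{\mu}(r)$ has no radial limit (consistently, $\int|1-e^{it}|^{-1}d\mu=\infty$ here). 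Your dominated-convergence argument controls only $\Re\bigl(-\log I(r\zeta)\bigr)$; the real content of (3)$\Rightarrow$(1) is that condition \eqref{F1} also dominates the imaginary part, e.g.\ via the uniform bound $\sup_{0<r<1}\frac{2r|\sin t|}{|e^{it}-r|^{2}}\lesssim|e^{it}-1|^{-1}$ for the singular factor and the analogous uniform bound $O\bigl((1-|\lambda_n|)/|\zeta-\lambda_n|\bigr)$ on the oscillation of $\arg b_{\lambda_n}(r\zeta)$, which gives uniform convergence of the series/integral defining $\log I(r\zeta)$ up to $r=1$; this applies verbatim to every divisor, since each inherits (3).

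Second, (2)$\Rightarrow$(3) is a plan rather than a proof, as you yourself acknowledge, and it is the hard half of the theorem. Beyond the difficulty you name, the construction faces obstacles that cannot be legislated away: the sign of the argument swing of $b_{\lambda_n}(r\zeta)$ is dictated by which side of the radius $\lambda_n$ lies on, so ``alternating direction with $k$'' is not at your disposal; each individual swing has size only $O\bigl((1-|\lambda_n|)/|\zeta-\lambda_n|\bigr)$ and occurs in the transit window $1-r\asymp|\zeta-\lambda_n|$, so swings of different factors live at different scales and cannot simply be accumulated at a single radius; and you must also keep the modulus of the selected divisor from tending to $0$, since a divisor with radial limit $0$ does have a radial limit. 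As written, neither nontrivial implication is established.
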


Based on a stronger condition than the above, Ahern and Clark \cite{AC70} were
able to characterize ``good'' non-tangential boundary behavior of functions in 
the model spaces $(I H^2)^{\perp}$ of the classical Hardy space $H^2$ (see \cite{Niktr} for a very complete treatment
of model spaces). 

\begin{theorem}[\cite{AC70}]  \label{AC-paper}
Let $I=B S_{\mu}$ be an inner function with 
zeros $\{\lambda_{n}\}_{n \geq 1}$ and associated
singular measure $\mu$. For 
$\zeta \in \T$, the following are equivalent: 
\begin{enumerate}
\item Every $f \in (I H^2)^{\perp}$ has a radial limit at $\zeta$.
\item  The following condition holds
\begin{equation} \label{AC1}
\sum_{n \geq 1} \frac{1 - |\lambda_{n}|}{|\zeta - \lambda_{n}|^2} 
 + \int_{\T}\frac{1}{|\zeta-e^{it}|^2}d\mu(e^{it})< \infty.
\end{equation}
\end{enumerate}
\end{theorem}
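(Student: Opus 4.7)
The plan is to exploit the reproducing kernel structure of the model space $K_I := (IH^2)^{\perp}$. Recall that $K_I$ is a reproducing kernel Hilbert subspace of $H^2$ with kernel
\[
k_w^I(z) = \frac{1 - \overline{I(w)}\, I(z)}{1 - \bar w z}, \qquad w \in \D,
\]
and kernel norm $\|k_w^I\|_{H^2}^2 = (1-|I(w)|^2)/(1-|w|^2)$. Since $f(r\zeta) = \langle f, k_{r\zeta}^I \rangle$ for every $f \in K_I$ and $r \in (0,1)$, the radial behavior of elements of $K_I$ at $\zeta$ is controlled by the family $\{k_{r\zeta}^I\}_{r < 1}$.

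For the implication (1) $\Rightarrow$ (2), if every $f \in K_I$ has a finite radial limit at $\zeta$, then the evaluation functionals $L_r : f \mapsto f(r\zeta)$ are pointwise bounded on $K_I$, so by the uniform boundedness principle
\[
\sup_{r<1} \frac{1 - |I(r\zeta)|^2}{1-r^2} = \sup_{r<1} \|L_r\|^2 < \infty.
\]
Factoring $I = B S_{\mu}$ and using $1 - |I(r\zeta)|^2 \geq 1 - |B(r\zeta)|^2$ (and analogously for $S_\mu$), the same supremum bound passes to the Blaschke and singular factors separately. For the Blaschke part, I would truncate to a finite subproduct $B_N$ and use
\[
1 - |b_{\lambda_n}(r\zeta)|^2 = \frac{(1-|\lambda_n|^2)(1-r^2)}{|1-\bar\lambda_n r\zeta|^2}
\]
together with an elementary expansion to obtain uniform control of $\sum_{n=1}^N (1-|\lambda_n|^2)/|1-\bar\lambda_n r\zeta|^2$; letting $r \to 1^-$ and then $N \to \infty$ via Fatou produces the Blaschke sum in \eqref{AC1}. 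For the singular part, writing $|S_\mu(r\zeta)|^2 = \exp(-2 P_\mu(r\zeta))$ with $P_\mu(r\zeta)/(1-r^2) = \int_\T d\mu(e^{it})/|r\zeta - e^{it}|^2$, I would first show that boundedness of $(1-|S_\mu(r\zeta)|^2)/(1-r^2)$ forces $P_\mu(r\zeta)$ to stay bounded, and then invoke Fatou's lemma on the integral representation to obtain the singular piece of \eqref{AC1}.

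For the reverse direction (2) $\Rightarrow$ (1), I would first observe that \eqref{AC1} dominates \eqref{F1} (on $\T$ one has $|\zeta - e^{it}| \leq 2$, and similarly for the Blaschke terms), so Theorem~\ref{thm1.1} provides a unimodular radial limit $I(\zeta) := \lim_{r \to 1^-} I(r\zeta)$. This lets us define the boundary kernel
\[
k_\zeta^I(z) := \frac{1 - \overline{I(\zeta)} I(z)}{1 - \bar\zeta z}.
\]
Reversing the estimates above by monotone/dominated convergence on the Blaschke series and the Poisson integral shows that $(1-|I(r\zeta)|^2)/(1-r^2)$ converges as $r \to 1^-$ to a finite value equal to $\|k_\zeta^I\|_{H^2}^2$. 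Since $k_{r\zeta}^I \to k_\zeta^I$ pointwise (hence weakly in $H^2$) and the norms converge, $k_{r\zeta}^I \to k_\zeta^I$ strongly in $H^2$. For any $f \in K_I$ this yields
\[
\lim_{r \to 1^-} f(r\zeta) = \lim_{r \to 1^-} \langle f, k_{r\zeta}^I \rangle = \langle f, k_\zeta^I \rangle,
\]
a finite radial limit, completing the equivalence.

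The principal obstacle is establishing the sharp two-sided asymptotic equivalence between $(1-|I(r\zeta)|^2)/(1-r^2)$ and the sum-plus-integral of \eqref{AC1}. The Blaschke side is essentially an algebraic exercise with Fatou, but the singular side requires careful separation of the cases where $P_\mu(r\zeta)$ stays bounded and where it blows up, together with matching of constants so that boundedness in one direction genuinely forces boundedness in the other.
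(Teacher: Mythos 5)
The paper does not prove this statement at all: Theorem \ref{AC-paper} is quoted as background from Ahern--Clark \cite{AC70}, so there is no internal proof to compare against. Judged on its own, your argument is the standard reproducing-kernel proof of the Ahern--Clark theorem and its architecture is sound. For $(1)\Rightarrow(2)$, Banach--Steinhaus correctly gives $\sup_r(1-|I(r\zeta)|^2)/(1-r^2)<\infty$; the factorwise bound $1-|I|^2\ge\max\{1-|B|^2,\,1-|S_\mu|^2\}$, truncation, and Fatou then work, provided you first observe that the uniform bound forces $|B(r\zeta)|\to1$ and $P_\mu(r\zeta)\to0$ (so that $1-\prod a_n\gtrsim\sum(1-a_n)$ for finite subproducts and $1-e^{-2P_\mu}\asymp P_\mu$ are legitimate). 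For $(2)\Rightarrow(1)$, the dominated convergence you invoke is justified by the uniform estimate $|1-r\bar\lambda_n\zeta|\ge\tfrac12|\zeta-\lambda_n|$ (and its analogue for the Poisson kernel), so \eqref{AC1} itself supplies the dominating series and integral. Two remarks. First, the one step you assert without argument is that $\lim_{r\to1^-}\|k^I_{r\zeta}\|^2$ equals $\|k^I_\zeta\|^2_{H^2}$; this is the identity $\|k^I_\zeta\|^2=|I'(\zeta)|$ and requires its own computation (or a Cauchy-sequence argument on the kernels). Second, you do not actually need it: pointwise convergence $k^I_{r\zeta}(w)\to k^I_\zeta(w)$ together with the uniform norm bound already gives weak convergence in $H^2$, and $f(r\zeta)=\langle f,k^I_{r\zeta}\rangle\to\langle f,k^I_\zeta\rangle$ follows for every $f\in(IH^2)^\perp$, which is exactly assertion (1). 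So the strong convergence is dispensable, and with that simplification the outline is complete modulo routine estimates.
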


In this paper, we will study what happens when we are somewhere in between the Frostman condition \eqref{F1} and the Ahern-Clark condition \eqref{AC1}. In order to do
so we will introduce an auxiliary function.  
Let $\vp:(0, +\infty)\to \R^+$ be a positive increasing function such that
\begin{enumerate}
\item $x\to \frac{\dst \vp(x)}{\dst x}$ is bounded,
\item $x\lmto \frac{\dst \vp(x)}{\dst x^2}$ is decreasing,
\item $\vp(x) \asymp \vp(x+o(x))$, $x\downarrow 0$.
\end{enumerate}
Such a function $\vp$ will be called \emph{admissible}. One can check that functions like $\vp(x) = x^{p}, 1 \leq p < 2$, and $\vp(x) = x^{p} \log(1/x)$, $1 < p < 2$, are admissible. 
Our main result is the following.

\begin{theorem}\label{main-thm}
Let $I=B S_{\mu}$ be an inner function with 
zeros $\{\lambda_{n}\}_{n \geq 1}$ and associated
singular measure $\mu$,
$\vp$ an admissible function, and $\zeta\in\T$. 
If
\bea\label{grcond}
  \sum_{n \geq 1}\frac{1-|\lambda_n|}{\vp(|\zeta-\lambda_n|)}+
 \int_{\T}\frac{1}{\vp(|\zeta-e^{it}|)}d\mu(e^{it})<\infty,
\eea
then every $f \in (I H^2)^{\perp}$ satisfies 
\begin{equation} \label{main-thm-est}
 |f(r \zeta)|\lesssim  \frac{\sqrt{\vp(1-r)}}{1-r}.
\end{equation}
\end{theorem}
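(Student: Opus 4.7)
The plan is to bound $f \in K_I := (IH^2)^{\perp}$ pointwise via the reproducing kernel
\[
k_w^I(z) = \frac{1 - \overline{I(w)}I(z)}{1 - \bar w z}, \qquad \|k_w^I\|_2^2 = \frac{1-|I(w)|^2}{1-|w|^2},
\]
so that Cauchy--Schwarz gives $|f(r\zeta)|^2 \leq \|f\|_2^2\,(1-|I(r\zeta)|^2)/(1-r^2)$. It therefore suffices to prove
\[
\frac{1 - |I(r\zeta)|^2}{1-r^2} \lesssim \frac{\varphi(1-r)}{(1-r)^2}.
\]

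To estimate the left-hand side, I would use three classical inequalities: $1 - |BS_\mu|^2 \leq (1-|B|^2) + (1-|S_\mu|^2)$; the telescoping bound $1 - |B|^2 \leq \sum_n (1-|b_{\lambda_n}|^2)$ combined with the identity $1 - |b_\lambda(z)|^2 = (1-|\lambda|^2)(1-|z|^2)/|1-\bar\lambda z|^2$; and $1 - |S_\mu(z)|^2 \leq -2\log|S_\mu(z)| = 2\int (1-|z|^2)/|e^{it}-z|^2\,d\mu(e^{it})$. Dividing by $1-r^2$ yields
\[
\frac{1-|I(r\zeta)|^2}{1-r^2} \lesssim \sum_n \frac{1-|\lambda_n|}{|1-\bar\lambda_n r\zeta|^2} + \int_\T \frac{d\mu(e^{it})}{|e^{it}-r\zeta|^2}.
\]

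The heart of the argument is to bound each of these two pieces by $\varphi(1-r)/(1-r)^2$, which is where admissibility enters. First I would record the elementary comparisons $|1-\bar\lambda_n r\zeta| \asymp (1-r)+|\zeta-\lambda_n|$ and $|e^{it}-r\zeta| \asymp (1-r)+|\zeta-e^{it}|$. Then I would split the sum and the integral according to whether the relevant distance is $\leq 1-r$ or $> 1-r$. In the first regime, monotonicity of $\varphi$ gives $\varphi(|\zeta-\lambda_n|) \leq \varphi(1-r)$, hence
\[
\frac{1-|\lambda_n|}{|1-\bar\lambda_n r\zeta|^2} \lesssim \frac{1-|\lambda_n|}{(1-r)^2} \leq \frac{\varphi(1-r)}{(1-r)^2}\cdot\frac{1-|\lambda_n|}{\varphi(|\zeta-\lambda_n|)}.
\]
In the second regime, property (2) of admissibility ($x\mapsto \varphi(x)/x^2$ decreasing) yields $\varphi(|\zeta-\lambda_n|)/|\zeta-\lambda_n|^2 \leq \varphi(1-r)/(1-r)^2$, and the same upper bound follows. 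The same dichotomy handles the integral, the hypothesis \eqref{grcond} ensuring that the resulting sums and integrals are finite, whence the claimed pointwise bound.

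The main obstacle---if one can call it that---is the careful bookkeeping in the two regimes together with the two elementary comparisons; there is no deep difficulty. Properties (1) and (3) of admissibility do not appear to play a role in this direction of the theorem; I would expect property (3) to enter elsewhere in the paper, perhaps for converse or sharpness statements, while property (1) only serves to ensure that the hypothesis \eqref{grcond} implies the Frostman condition \eqref{F1}.
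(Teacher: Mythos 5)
Your proposal is correct, and its overall strategy coincides with the paper's: bound $|f(r\zeta)|$ by $\|f\|\,\|k^I_{r\zeta}\|$, reduce to estimating $(1-|I(r\zeta)|^2)/(1-r^2)$ by a Blaschke sum plus a Poisson-type integral against $\mu$, and then run a dichotomy on whether $|\zeta-\lambda_n|$ (resp.\ $|\zeta-e^{it}|$) is $\le 1-r$ or $>1-r$, using monotonicity of $\vp$ in the near regime and monotonicity of $\vp(x)/x^2$ in the far regime. Where you differ is in the implementation, and your version is genuinely more elementary at two points. First, to pass from $1-|I|^2$ to the sum and integral you use the one-sided inequalities $1-ab\le(1-a)+(1-b)$, $1-\prod a_n\le\sum(1-a_n)$ and $1-e^{-x}\le x$, which hold unconditionally; the paper instead establishes the two-sided asymptotic $1-|I(r)|^2\sim\log|B(r)|^{-2}+\log|S_\mu(r)|^{-2}$, which forces it to first invoke the Frostman condition (hence property (1) of admissibility) to know $|B(r)|,|S_\mu(r)|\to1$ and that the zeros are eventually pseudohyperbolically far from the radius. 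Since only an upper bound is needed here, your route removes that preliminary step. Second, in each regime the paper applies Cauchy--Schwarz with weights $\sqrt{\vp}$ and then ``divides through and squares''; your direct factorization $\frac{1-|\lambda_n|}{|\zeta-\lambda_n|^2}=\frac{1-|\lambda_n|}{\vp(|\zeta-\lambda_n|)}\cdot\frac{\vp(|\zeta-\lambda_n|)}{|\zeta-\lambda_n|^2}$ reaches the same bound in one line and is what the Cauchy--Schwarz argument reduces to anyway. Your closing remark about the admissibility hypotheses is also accurate: by working with the chordal distances $|\zeta-\lambda_n|$ rather than the arguments $\theta_n$, you avoid property (3) (which the paper needs to replace $\vp(|1-e^{i\theta}|)$ by $\vp(\theta)$), and property (1) plays no role in your argument at all. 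The only price of the one-sided shortcuts is that your intermediate inequalities cannot be upgraded to two-sided kernel estimates, but for the stated theorem nothing two-sided is required.
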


When $\vp(x)=x$ then we are in the Frostman situation \eqref{F1} and no
restriction is given for the growth of $f$ since generic functions in $H^2$ satisfy the growth condition 
$$|f(r \zeta)| = o(\frac{1}{\sqrt{1 - r}})$$ On the other hand, when  $\vp(x)=x^2$ we reach the
Ahern-Clark situation \eqref{AC1} . For other $\vp$ such as $\vp(x) = x^{3/2}$ or perhaps $\vp(x) = x^2 \log(1/x)$ we get that even though functions in $(I H^2)^{\perp}$ can be poorly behaved (as in the title of this paper), the growth is controlled. 

There is some history behind these types of problems. When $\vp(x) = x^{2 N + 2}$, where $N = 0, 1, 2, \cdots$,  Ahern and Clark \cite{AC70} 
showed that \eqref{grcond} is equivalent to the condition that $f^{(j)}$, $0 \leq j \leq N$, have radial limits at $\zeta$ for every $f \in (I H^2)^{\perp}$. When $\vp(x) = x^{p}$, $p \in (1, \infty)$, Cohn \cite{cohn} showed that \eqref{grcond} is equivalent to the condition that every $f \in H^{q} \cap I \overline{H_{0}^{q}}$, where $q = p (p - 1)^{-1}$, has a finite radial limit at $\zeta$.  

Why did we write this second paper? In \cite{HR3} we discussed controlled growth of functions from $(B H^2)^{\perp}$, where $B$ is a Blaschke product not satisfying the condition \eqref{AC1} of the Ahern-Clark theorem. We have a general result but stated in very different terms, and using very different techniques, than the paper here. In particular, in \cite{HR3}
we obtain two-sided estimates for the reproducing kernels which 
yields more precise results. The results presented here are one-sided estimates but are for general inner functions and not just Blaschke products. 

\section{Proof of the main result}\label{section3}

It is well known that $(I H^2)^{\perp}$ is a reproducing kernel Hilbert space with kernel function 
$$k^{I}_{\lambda}(z) := \frac{1 - \overline{I(\lambda)} I(z)}{1 - \overline{\lambda} z}.$$ It suffices to prove Theorem \ref{main-thm} for $\zeta = 1$. 
If $\|\cdot\|$ denotes the norm in $H^2$, the estimate in \eqref{main-thm} follows from the following result along with the obvious estimate 
$$|f(r)| \leq \|f\| \|k^{I}_{r}\|, \quad f \in (I H^2)^{\perp}, \quad r \in (0, 1).$$

\begin{theorem}\label{mainthm}
Let $I=B S_{\mu}$ be an inner function with 
zeros $\{\lambda_{n}\}_{n \geq 1}$ and associated
singular measure $\mu$ and 
$\vp$ be an admissible function.
If
\bea\label{grcondbis}
  \sum_{n \geq 1}\frac{1-|\lambda_n|}{\vp(|1-\lambda_n|)}+
 \int_{\T}\frac{1}{\vp(|1-e^{it}|)}d\mu(e^{it})<\infty,
\eea
then
\begin{equation} \label{main-k-est}
 \|k^I_{r}\|^2\lesssim \frac{\vp(1-r)}{(1-r)^2}.
\end{equation}
\end{theorem}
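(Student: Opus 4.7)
The plan is to reduce $\|k^I_r\|^2$ to a sum and integral that mirror the two pieces of the hypothesis~\eqref{grcondbis}, and then to extract from the admissibility of $\varphi$ a single pointwise comparison that finishes both. First I would use the reproducing kernel identity
\[
\|k^I_r\|^2 \;=\; k^I_r(r) \;=\; \frac{1 - |I(r)|^2}{1 - r^2}.
\]
Factoring $I = B S_\mu$, the inequality $1 - ab \leq (1 - a) + (1 - b)$ for $a,b \in [0,1]$ gives $1 - |I(r)|^2 \leq (1 - |B(r)|^2) + (1 - |S_\mu(r)|^2)$. For the Blaschke part I would use $1 - \prod_n x_n \leq \sum_n (1 - x_n)$ together with the standard identity $1 - |b_\lambda(r)|^2 = (1 - |\lambda|^2)(1 - r^2)/|1 - \bar\lambda r|^2$. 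For the singular part, the estimate $1 - e^{-t} \leq t$ applied to $t = 2 P_\mu(r) = -2\log|S_\mu(r)|$ gives $1 - |S_\mu(r)|^2 \leq 2 P_\mu(r)$. Dividing by $1 - r^2$ produces
\[
\|k^I_r\|^2 \;\lesssim\; \sum_{n \geq 1} \frac{1 - |\lambda_n|}{|1 - \bar\lambda_n r|^2} \;+\; \int_\T \frac{d\mu(e^{it})}{|e^{it} - r|^2}.
\]

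The key ingredient is then a uniform pointwise comparison: for every $w \in \overline{\D}$ and every $r \in [\tfrac{1}{2}, 1)$,
\[
\frac{\varphi(|1 - w|)}{|1 - \bar w r|^2} \;\lesssim\; \frac{\varphi(1 - r)}{(1 - r)^2}. \qquad (\dagger)
\]
I would prove $(\dagger)$ by a dichotomy on whether $|1 - w|$ exceeds $2(1 - r)$. If $|1 - w| \geq 2(1 - r)$, the triangle inequality yields $|1 - \bar w r| \geq |1 - w| - (1 - r) \geq |1 - w|/2$, and since $x \mapsto \varphi(x)/x^2$ is decreasing (property (2)) we obtain $\varphi(|1 - w|)/|1 - w|^2 \leq \varphi(1 - r)/(1 - r)^2$, from which $(\dagger)$ follows up to a factor of $4$. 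If instead $|1 - w| < 2(1 - r)$, then $|1 - \bar w r| \geq 1 - |w| r \geq 1 - r$; and property (2) also forces the doubling bound $\varphi(2x) \leq 4 \varphi(x)$, so $\varphi(|1 - w|) \leq \varphi(2(1 - r)) \leq 4 \varphi(1 - r)$, and $(\dagger)$ again holds.

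Finally I would apply $(\dagger)$ with $w = \lambda_n$, multiplying and dividing each summand by $\varphi(|1 - \lambda_n|)$ to expose the terms of the hypothesis:
\[
\sum_n \frac{1 - |\lambda_n|}{|1 - \bar\lambda_n r|^2} = \sum_n \frac{1 - |\lambda_n|}{\varphi(|1 - \lambda_n|)} \cdot \frac{\varphi(|1 - \lambda_n|)}{|1 - \bar\lambda_n r|^2} \lesssim \frac{\varphi(1 - r)}{(1 - r)^2} \sum_n \frac{1 - |\lambda_n|}{\varphi(|1 - \lambda_n|)},
\]
and apply the analogous argument with $w = e^{it}$ to the singular integral. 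The hypothesis~\eqref{grcondbis} then bounds the remaining sum and integral by a constant, giving~\eqref{main-k-est}. The case $r \leq 1/2$ is trivial since $|I(r)| \leq 1$ makes $\|k^I_r\|^2$ uniformly bounded there while $\varphi(1 - r)/(1 - r)^2$ stays away from zero. I expect the main obstacle to be the pointwise comparison $(\dagger)$: it is precisely where the admissibility hypotheses on $\varphi$ (above all, the decrease of $\varphi(x)/x^2$ together with the doubling it entails) are put to work, and the case split on $|1 - w|$ relative to $1 - r$ must be handled carefully on both sides of the threshold.
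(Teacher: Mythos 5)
Your proof is correct, and it takes a genuinely different route from the paper's. The differences are twofold. First, in the reduction to the sum-plus-integral: the paper writes $1-|I(r)|^2 = 1-\exp(\log|B(r)|^2+\log|S_\mu(r)|^2)$ and expands the exponential asymptotically, which requires first invoking the Frostman theorem (via admissibility property (1)) to know that $\log|B(r)|,\log|S_\mu(r)|\to 0$; you instead use the purely elementary upper bounds $1-ab\le(1-a)+(1-b)$, $1-\prod x_n\le\sum(1-x_n)$ and $1-e^{-t}\le t$, which need no preliminary convergence facts and suffice because only a one-sided estimate is claimed. Second, and more substantially, where the paper splits the sum and the integral at the threshold $\theta_n\lessgtr 1-r$ and applies Cauchy--Schwarz separately in each region (with the hypothesis appearing under a square root that is then divided out), you prove the single pointwise domination $(\dagger)$, after which the hypothesis \eqref{grcondbis} factors out linearly in one line. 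Your dichotomy in $(\dagger)$ is sound: in the far case $|1-\bar w r|\ge |1-w|-(1-r)\ge|1-w|/2$ and the monotone decrease of $\varphi(x)/x^2$ does the rest, while in the near case $|1-\bar w r|\ge 1-r$ and the same property (2) yields the doubling $\varphi(2x)\le 4\varphi(x)$. A pleasant by-product of working directly with $\varphi(|1-w|)$ rather than $\varphi(\theta)$ is that you never need admissibility property (3) (nor property (1)); the paper's version, by contrast, keeps two-sided equivalences alive through the reduction step, which is in the spirit of the sharper two-sided kernel estimates they discuss elsewhere but is not actually exploited for this theorem. One point worth making explicit in a final write-up: the hypothesis forces $\mu(\{1\})=0$ (since $\varphi(0^+)=0$ by property (1)), so $(\dagger)$ applies $\mu$-a.e.\ in the singular integral.
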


\begin{proof}
Our first observation is that since $x\lmto \vp(x)/x$ is bounded, 
\eqref{grcondbis} implies condition \eqref{F1}. By Theorem \ref{thm1.1}
this implies that $\lim_{r \to 1^{-}} |B(r)| = \lim_{r \to 1^{-}} |S_{\mu}(r)| = 1$.
Hence
\[
 \|k_r^I\|^2=\frac{1-|I(r)|^2}{1-r^2}
 =\frac{1-\exp(\log(|I(r)|^2))}{1-r^2}
 =\frac{1-\exp(\log(|B(r)|^2+\log|S_{\mu}(r)|^2))}{1-r^2},
\] 
and since $\log|B(r)|\to 0$ and $\log|S_{\mu}(r)|\to 0$ when
$r\to 1$, we get
\beqa
\|k_r^I\|^2&=&
  \frac{1-\exp(\log|B(r)|^2+\log|S_{\mu}(r)|^2)}{1-r^2} \\
 &=&\frac{1-\left(1+\Big(\log|B(r)|^2+\log|S_{\mu}(r)|^2\Big)+
  o\Big(\log|B(r)|^2+\log|S_{\mu}(r)|^2\Big) \right)} {1-r^2}\\
 &\sim&\frac{\log|B(r)|^{-2}+\log|S_{\mu}(r)|^{-2}}{1-r^2}.
\eeqa
Thus to prove the estimate in \eqref{main-k-est} we need to prove 
\begin{equation} \label{main-k-B-est}
 \frac{\log|B(r)|^{-2}}{1-r^2} \lesssim  \frac{\vp(1-r)}{(1-r)^2}
 \end{equation} 
and 
\begin{equation} \label{main-k-S-est}
 \frac{\log|S_{\mu}(r)|^{-2}}{1-r^2} \lesssim  \frac{\vp(1-r)}{(1-r)^2}.
 \end{equation} 

\underline{Case 1: the Blaschke product $B$.}

First note that from the Frostman condition \eqref{F1} we get 
\bea\label{cv-zero}
 \frac{1-|\lambda_n|}{|1-\lambda_n|}\lra 0.
\eea
In particular, from a certain index  $n_0$ on the points $\lambda_n$, $n \geq n_0$,  will be
pseudohyperbolically far from the radius $[0,1)$, i.e., there is a $\delta$
such that for every $n\ge n_0$ and $r\in [0,1)$,
\[
 |b_{\lambda_n}(r)|\ge \delta.
\]
This implies
$$\log \frac{1}{|b_{\lambda_n}(r)|^2} \asymp 1 - |b_{\lambda_n}(r)|^2.$$
A well known calculation shows that 
$$1 - |b_{\lambda_n}(r)|^2 = \frac{(1 - r^2)(1 - |\lambda_{n}|^2)}{|1 - r \overline{\lambda_n} |^2}. 
$$
Thus 
\bea\label{estimlogB}
 \frac{\log|B(r)|^{-2}}{1-r^2}
 &=&\frac{1}{1-r^2}\sum_{n \geq 1}\log\frac{1}{|b_{\lambda_n}(z)|^2}
 \asymp \sum_{n \geq 1}\frac{1-|\lambda_n|^2}{|1-\overline{\lambda_n} r|^2}.
\eea

Now let $\lambda_n=r_ne^{i\theta_n}$. We need the following two easy estimates:
\begin{equation} \label{Pythag-Lemma}
|1 - \rho  e^{i \theta}|^2 \asymp (1 - \rho )^2 + \theta^2, \quad  \rho \approx 1, \theta \approx 0.
\end{equation}
\bea\label{norms-equiv}
 (|z|^2 + |w|^2)^{1/2}\asymp |z| + |w|,\quad z,w\in\C. 
\eea

In particular, $|1-\lambda_n|^2\asymp (1-r_n)^2+\theta_n^2$. 
We now remember condition \eqref{cv-zero} which implies
 that $1-r_n=1-|\lambda_n|=o(|1-\lambda_n|)
=o((1-r_n)+\theta_n)$ so that necessarily $1-r_n=o(\theta_n)$.
Hence
\[
 |1-\overline{\lambda}_nr|^2
 \asymp (1-r_nr)^2+\theta_n^2
 =(1-r_n+r_n(1-r))^2+\theta_n^2\asymp (1-r)^2+\theta_n^2.
\]
The estimate in \eqref{estimlogB} yields
\bea\label{estimKernelB}
 \frac{\log|B(r)|^{-2}}{1-r^2}
 &\asymp& \sum_{n \geq 1}\frac{1-|\lambda_n|^2}{|1-\overline{\lambda_n} r|^2}
 \asymp \sum_{n \geq 1}\frac{1-r_n}{(1-r)^2+\theta_n^2}
  \asymp \sum_{\{n:1-r<\theta_n\}}\frac{1-r_n}{\theta_n^2}
 + \sum_{\{n:1-r\ge\theta_n\}}\frac{1-r_n}{(1-r)^2}\nn\\
 &=& \sum_{\{n:1-r<\theta_n\}}\frac{1-r_n}{\theta_n^2}
 + \frac{1}{(1-r)^2}\sum_{\{n:1-r\ge\theta_n\}}(1-r_n).
\eea

Let us discuss each summand in \eqref{estimKernelB} individually. For the first,
we use the fact that $\vp$ is admissible and so $\vp(\theta) \asymp \vp(|1 - e^{i \theta}|)$ to get 
\beqa
 \sum_{\{n:1-r<\theta_n\}}\frac{1-r_n}{\theta_n^2}
 &=&\sum_{\{n:1-r<\theta_n\}}\frac{1-r_n}{\sqrt{\vp(\theta_n)}
 \theta_n^2/\sqrt{\vp(\theta_n)}}\\
 &\le& 
 \underbrace{\left(\sum_{\{n:1-r<\theta_n\}}\frac{1-r_n}{{\vp(\theta_n)}}\right)^{1/2}}_{
 \text{bounded by assumption}}
 \left(\sum_{\{n:1-r<\theta_n\}}\frac{1-r_n}{{\theta_n^4/\vp(\theta_n)}}\right)^{1/2}\\
 &\lesssim&
 \left(\sum_{\{n:1-r<\theta_n\}}\frac{1-r_n}{{\vp(\theta_n)
 (\theta_n^2/\vp(\theta_n))^2}}\right)^{1/2}.
\eeqa
By assumption, $x\to\vp(x)/x^2$ is decreasing. Hence we can bound
$\theta_n^2/\vp(\theta_n)$ below in this last sum by $(1-r)^2/\vp(1-r)$. Hence
\beqa
 \sum_{\{n:1-r<\theta_n\}}\frac{1-r_n}{\theta_n^2}
 \lesssim\frac{\vp(1-r)}{(1-r)^2}
 \left(\sum_{\{n:1-r<\theta_n\}}\frac{1-r_n}{{\vp(\theta_n)}}\right)^{1/2}
 \lesssim\frac{\vp(1-r)}{(1-r)^2} .
\eeqa

For the second sum in \eqref{estimKernelB} we have
\beqa
 \sum_{\{n:1-r\ge\theta_n\}}(1-r_n)
 &=&\sum_{\{n:1-r\ge\theta_n\}}(1-r_n)\frac{\sqrt{\vp(\theta_n)}}{\sqrt{\vp(\theta_n)}}\\
 &\le&
 \underbrace{\left( \sum_{\{n:1-r\ge\theta_n\}}\frac{(1-r_n)}{\vp(\theta_n) }\right)^{1/2}}_{
 \text{bounded by assumption}}
 \left( \sum_{\{n:1-r\ge\theta_n\}}(1-r_n)\vp(\theta_n) \right)^{1/2}\\
 &\lesssim&\sqrt{\vp(1-r)}\left( \sum_{\{n:1-r\ge\theta_n\}}(1-r_n)\right)^{1/2},
\eeqa
where we have used the fact that $\vp$ is increasing. Dividing through the square
root of the sum, this last inequality (and then squaring) 
implies
\[
 \sum_{\{n:1-r\ge\theta_n\}}(1-r_n)\lesssim  \vp(1-r).
\]
This verifies \eqref{main-k-B-est}.

\underline{Case 2: the singular inner factor $S_{\mu}$.}

This case is very similar to the first case. Indeed,
\[
 \frac{\log|S_{\mu}(r)|^{-2}}{1-r^2}=2\int_{\T}\frac{1}{|1-re^{i\theta}|^2}d\mu(e^{i\theta})
  \asymp\int_{\T}\frac{1}{(1-r)^2+\theta^2}d\mu(e^{i\theta})
\]
where we have again used \eqref{Pythag-Lemma}.
As in the Blaschke situation we split the integral into two parts depending on
which term in the denominator dominates:
\bea\label{estimSmu}
 \frac{\log|S_{\mu}(r)|^{-2}}{1-r^2}
 &\lesssim&\int_{\{\theta:1-r\le\theta\}}\frac{1}{(1-r)^2+\theta^2}d\mu(e^{i\theta})+
 \int_{\{\theta:1-r\ge \theta\}}\frac{1}{(1-r)^2+\theta^2}d\mu(e^{i\theta})\nn\\
 &\asymp& \int_{\{\theta:1-r\le\theta\}}\frac{1}{\theta^2}d\mu(e^{i\theta})
 +\frac{1}{(1-r)^2}\int_{\{\theta:1-r\ge \theta\}}d\mu(e^{i\theta}).
\eea
Let us consider the first integral.
\beqa
 \int_{\{\theta:1-r\le\theta\}}\frac{1}{\theta^2}d\mu(e^{i\theta})
 &=&\int_{\{\theta:1-r\le\theta\}}
 \frac{1} {\sqrt{\vp(\theta)}\theta^2/\sqrt{\vp(\theta)}}d\mu(e^{i\theta})\\
 &\le&
 \left(\int_{\{\theta:1-r\le\theta\}}
 \frac{1} {{\vp(\theta)}}d\mu(e^{i\theta})\right)^{1/2}
 \left(\int_{\{\theta:1-r\le\theta\}}
 \frac{1} {{\theta^4/{\vp(\theta)}}}d\mu(e^{i\theta})\right)^{1/2}.
\eeqa
Note that $|1-e^{i\theta}|\asymp \theta$. Then using the hypothesis of
admissibility we have $\vp(\theta) \asymp \vp(|1 - e^{i \theta}|)$ 
and so
\[
\int_{ }
 \frac{1}{{\vp(\theta)}}d\mu(e^{i\theta})
 \asymp \int_{ }
 \frac{1}{{\vp(|1-e^{i\theta}|)}}d\mu(e^{i\theta})
 \]
which is bounded by assumption.
Hence, by the Cauchy-Schwarz inequality,
\beqa
 \int_{\{\theta:1-r\le\theta\}}\frac{1}{\theta^2}d\mu(e^{i\theta})
 \lesssim\left(\int_{\{\theta:1-r\le\theta\}}
 \frac{1} {{\theta^4/{\vp(\theta)}}}d\mu(e^{i\theta})\right)^{1/2}
 =\left(\int_{\{\theta:1-r\le\theta\}}
 \frac{{\vp^2(\theta)}} {{\vp(\theta)\theta^4}}d\mu(e^{i\theta})\right)^{1/2}.
\eeqa
Now using the fact that $x\lra \vp(x)/x^2$ is decreasing we obtain
$\vp^2(\theta)/\theta^4\le (\vp(1-r))^2/(1-r)^4$. Hence
\[
 \int_{\{\theta:1-r\le\theta\}}\frac{1}{\theta^2}d\mu(e^{i\theta})
 \lesssim\frac{\vp(1-r)}{(1-r)^2}\left(\int_{\{\theta:1-r\le\theta\}}
 \frac{{1}} {{\vp(\theta)}}d\mu(e^{i\theta})\right)^{1/2}
 \lesssim \frac{\vp(1-r)}{(1-r)^2}.
\] 

We turn to the second integral in \eqref{estimSmu} to get
\beqa
 \int_{\{\theta:1-r\ge \theta\}}d\mu(e^{i\theta})
 &=&\int_{\{\theta:1-r\ge \theta\}}
 \frac{\sqrt{\vp(\theta)}}{\sqrt{\vp({\theta})}}d\mu(e^{i\theta})\\
 &\le& \left( \int_{\{\theta:1-r\ge \theta\}}
   \vp({\theta}) d\mu(e^{i\theta})\right)^{1/2}
 \left( \int_{\{\theta:1-r\ge \theta\}}
 \frac{1}{{\vp({\theta})}}d\mu(e^{i\theta})\right)^{1/2}.
\eeqa
We have already seen above that
the second factor above is bounded by assumption. 
Using the fact that $\vp$ is increasing we get
\beqa
 \int_{\{\theta:1-r\ge \theta\}}d\mu(e^{i\theta})
 \lesssim 
  \left( \int_{\{\theta:1-r\ge \theta\}}
 {\vp(\theta)}d\mu(e^{i\theta})\right)^{1/2}
  \le \sqrt{\vp(1-r)}\left( \int_{\{\theta:1-r\ge \theta\}}
 d\mu(e^{i\theta})\right)^{1/2}.
\eeqa
Dividing through by the integral (and then squaring), we obtain
\[
  \int_{\{\theta:1-r\ge \theta\}}d\mu(e^{i\theta})\lesssim \vp(1-r),
\]
which verifies \eqref{main-k-S-est}.
\end{proof}

\section{An example}

The Blaschke situation was
discussed in \cite{HR3} where we obtained two-sided estimates for the
reproducing kernels. It can be shown with concrete examples that the estimates
from Theorem \ref{mainthm} are in general weaker than those obtained in
\cite{HR3} for Blaschke products.
 
Let us discuss the simplest case, in fact close enough to a 
Blaschke product, that a singular inner function $S_{\mu}$ with a discrete measure $\mu$. Let
\[
 \mu=\sum_{n \geq 1} \alpha_n\delta_{\zeta_n},
\]
where $\delta_{\zeta_n} \in \T$ and  $\alpha_n$ are positive numbers with $\sum_n\alpha_n<\infty$
guaranteeing that $\mu$ is a finite measure on $\T$. 
Let us fix 
\[
 \zeta_n=e^{i\theta_n}=e^{i/n},\quad 
 \alpha_n=\frac{1}{n^{1+\eps}},\quad n = 1, 2, \ldots.
\]
Also let $\vp(t)=t^{\gamma}$ which defines an admissible function
for $1<\gamma< 2$. In order to have condition \eqref{grcondbis}
it is necessary and sufficient to have
\[
 \sum_n\alpha_n\frac{1}{\vp(|1-e^{i\theta_n}|)}
 \simeq \sum_n\frac{1}{n^{1+\eps}}\frac{1}{\vp(1/n)}
 \simeq \sum_n\frac{n^{\gamma}}{n^{1+\eps}}
 =\sum_n \frac{1}{n^{1+\eps-\gamma}}<\infty
\]
which is equivalent to $ \gamma<\eps$.
We suppose that 
\bea\label{condeps}
 1<\eps<2.
\eea
By Theorem \ref{mainthm} we deduce that
\[
 \|k_r^I\|^2\lesssim \frac{\vp(1-r)}{(1-r)^2}
 =\left(\frac{1}{1-r}\right)^{2-\gamma}.
\] 
In this situation we have 
$$|f(r)| \lesssim \frac{1}{(1 - r)^{1 - \gamma/2}}, \quad f \in (S_{\mu} H^2)^{\perp},$$
which is slower growth than the standard estimate 
$$|f(r)| \lesssim \frac{1}{(1 - r)^{1/2}}, \quad f \in H^2.$$

In this situation, it is actually possible to get a double-sided estimate for the
reproducing kernel: since $\vp$ is admissible, 
Theorem \ref{thm1.1}
implies that $I(r)\lra \eta\in\T$ when $r \to 1^{-}$.
In particular for $r\in (0,1)$, this implies that
\[
  |I(r)|=\exp\left(-\sum_n\alpha_n\frac{1-r^2}{|\zeta_n-r|^2}\right)
 \sim 1-\sum_n\alpha_n\frac{1-r^2}{|\zeta_n-r|^2}.
\]
Let us consider the reproducing kernel of $(S_{\mu} H^2)^{\perp}$ at $r=\rho_N=1-2^{-N}$. Indeed,
\beqa
 \|k^I_{\rho_N}\|^2
 &=&\frac{1-|I(\rho_N)|^2}{1-\rho_N^2}
 \asymp 2^N\left( 1- \exp \left(-\sum_n\alpha_n\frac{1-\rho_N^2}
  {|\zeta_n-\rho_N|^2}\right)\right)\\
 &\asymp& 2^N\left( 1-\left(1-\sum_n\alpha_n\frac{1/2^N}
  {|\zeta_n-\rho_N|^2}\right)\right)\\
 &\asymp& \sum_n\frac{\alpha_n}{|\zeta_n-\rho_N|^2}.
\eeqa
Now using \eqref{Pythag-Lemma}
\[
  |\zeta_n-\rho_N|^2\asymp \frac{1}{n^2}+\frac{1}{2^{2N}},
\]
and so
\beqa
 \|k^I_{\rho_N}\|^2
 &\asymp& \sum_n\frac{\alpha_n}{1/n^2 + 1/2^{2N}}
 = \sum_{n\le 2^N}\frac{\alpha_n}{1/n^2} +\sum_{n>2^N}
  \frac{\alpha_n}{1/2^{2N}}\\
 &\asymp&\sum_{n\le 2^N} \frac{n^2}{n^{1+\eps}}
 +2^{2N}\sum_{n>2^N}\frac{1}{n^{1+\eps}}
  \asymp 2^{(2-\eps)N}\\
 &=&\left(\frac{1}{1-\rho_N}\right)^{2-\eps}
\eeqa
or, equivalently, 
\bea\label{EstimRepKern}
 \|k^I_{\rho_N}\|\asymp \left(\frac{1}{1-\rho_N}\right)^{1-\eps/2}
\eea
(the estimate extends to the whole radius). As a consequence, the estimate
from Theorem \ref{mainthm}
is not optimal, though it is possible to come closer to it by choosing e.g.,
$\vp(t)=t^{\eps}/\log^{1+\gamma}(1/t)$, $\gamma>0$.

\section{A lower estimate}

We finish the paper with a construction of an $f \in (S_{\mu} H^2)^{\perp}$, with $\mu$ the discrete measure discussed in the previous section, 
getting close to the growth given by the norm of the reproducing kernels
thoughout the whole radius $(0,1)$. 
As in \cite{HR3} our
construction will be based on unconditional sequences. 
We need to recall some material on generalized interpolation
in Hardy spaces for which we refer the reader
to \cite[Section C3]{Nik2}.
Let $I=\prod_nI_n$ be a factorization of an inner function $I$ into
inner functions $I_n$, $n\in\N$. 
The sequence $\{I_n\}_{n \geq 1}$ satisfies the generalized Carleson
condition, sometimes called the  Carleson-Vasyunin condition, which we will write $\{I_n\}_{n \geq 1}\in(CV)$,
if there is a $\delta>0$ such that
\bea\label{CV}
 |I(z)|\ge \delta\inf_{n \geq 1}|I_n(z)|,\quad z\in\D.
\eea
In the special case of a Blaschke product $B=B_{\Lambda}$ with simple zeros
$\Lambda=\{\lambda_n\}_{n \geq 1}$ and $I_n=b_{\lambda_n}$, this
is equivalent to the well-known Carleson condition $\inf_n|B_{\Lambda\setminus
\{\lambda_n\}}(\lambda_n)|\ge\delta>0$.

If $\{I_n\}_{n \geq 1}\in (CV)$ then $\{(I_n H^2)^{\perp}\}_{n \geq 1}$ is an unconditional basis
for $(I H^2)^{\perp}$ meaning that every $f\in (I H^2)^{\perp}$ can be 
written uniquely as 
\[
 f=\sum_{n \geq 1} f_n,\quad f_n\in (I_n H^2)^{\perp},
\]
with
\[
 \|f\|^2\asymp\sum_{n \geq 1}\|f_n\|^2.
\] 
In our situation we have $I = S_{\mu}$ and 
\[
 I_n=e^{\dst\alpha_n\frac{\dst z+\zeta_n}{\dst z-\zeta_n}}.
\]
The corresponding 
spaces $(I_n H^2)^{\perp}$ are known to be isometrically isomorphic to the Paley-Wiener
space of analytic functions of exponential type $\alpha_n/2$ and square integrable
on the real axis.
In this situation a sufficient condition for \eqref{CV} is known:
\[
 \sup_{n \geq 1}\sum_{k\neq n}\frac{\mu(\{\zeta_n\})\mu(\{\zeta_k\})}{|\zeta_n-\zeta_k|^2}
 <\infty
\]
(see \cite[Corollary 6, p.\ 247]{Niktr}). So, since $\eps>1$ by
\eqref{condeps}, we have
\[
 \sup_{n \geq 1}\sum_{k\neq n}\frac{1/n^{1+\eps}1/k^{1+\eps}}{ |1/n-1/k|^2}
 =\sup_{n \geq 1}\sum_{k\neq n}\frac{1/n^{\eps-1}1/k^{\eps-1}}{ |n-k|^2} 
 \le \frac{\pi^2}{3}<\infty.
\]
Hence $(I H^2)^{\perp}$ is an $\ell^2$-sum of Paley-Wiener spaces (each of which possesses
for instance the harmonic unconditional basis).
In particular, picking 
\[
 \lambda_n:=r_n\zeta_n=r_n e^{i/n},
 \quad r_n=1-\frac{1}{n},
\]
the sequence $\{K_n\}_{n \geq 1}$, where
\[
 K_n=\frac{k_{\lambda_n}^{I_n}}{\|k_{\lambda_n}^{I_n}\|}\in (I_n H^2)^{\perp},
\]
is an unconditional sequence in $(I H^2)^{\perp}$. Observe that $\Lambda=\{\lambda_n\}_{n \geq 1}$ is
{\it not} a Blaschke sequence. We can introduce the
family of functions
\[
 f_{\beta}:=\sum_{n \geq 1}\beta_nK_n
\]
where $\|f_{\beta}\|^2\asymp\sum_{n \geq 1} |\beta_n|^2<\infty$.
Let us estimate the norms $\|k_{\lambda_n}^{I_n}\|$. First observe
that
\[
 \alpha_n\frac{\lambda_n+\zeta_n}
 {\lambda_n-\zeta_n}=\alpha_n\frac{r_n+1}
 {r_n-1}=\frac{1}{n^{1+\eps}}\frac{2-1/n}
 {-1/n}
 =-\frac{2-1/n}{n^{\eps}}\lra 0,\quad n\to\infty.
\]
Hence
\beqa
 \|k_{\lambda_n}^{I_n}\|^2
 &=&\frac{1-|I_n(\lambda_n)|^2}{1-r_n^2}
 \asymp \frac{1-|I_n(\lambda_n)|}{1-r_n}
 =\frac{1-\exp\Big(\log|I_n(\lambda_n)|\Big)}
 {1-r_n}\\
 &=&\frac{1-\exp\left(\alpha_n\frac{\lambda_n+\zeta_n}
 {\lambda_n-\zeta_n}\right)}{1-r_n}
 \sim\frac{1-\left(1+\alpha_n\frac{r_n+1}{r_n-1}\right)}{1-r_n}\\
 &\sim&\frac{2\alpha_n}{(1-r_n)^2},
\eeqa
so that
\[
 \|k_{\lambda_n}^{I_n}\|\asymp \sqrt{\frac{\alpha_n}{(1-r_n)^2}}
 =\frac{\sqrt{n^{-(1+\eps)}}}{1/n}=n^{1-1/2-\eps/2}=n^{(1-\eps)/2}.
\] 
Observe now that the $\lambda_n$'s belong to a Stolz domain with vertex at 1.  Indeed,
$$1-|\lambda_n|=1-r_n=1/n\simeq |1-\zeta_n|
\asymp |1-\lambda_n|$$ (this follows from \eqref{Pythag-Lemma}).
For fixed $\beta=\{\beta_n\}_{n \geq 1}$ with $\beta_n\ge 0$ we compute
\[
 \Re f_{\beta}(\lambda_N)
 \simeq\sum_{n \geq 1} {\beta_n}{n^{(\eps-1)/2}}
 \Re\frac{1-\overline{I_n(\lambda_n)}I_n(\lambda_N)}
 {1-\overline{\lambda_n}\lambda_N}.
\]
We have already seen that $\R\ni I_n(\lambda_n)\lra 1$, $n\to\infty$, and
\[
 I_n(\lambda_n)\sim 1-\alpha_n\frac{\dst 1+r_n}{\dst 1-r_n}
 \sim 1-\frac{\dst 2}{\dst n^{\eps}}.
\]
We have to consider
\[
 \alpha_n \frac{\lambda_N+\zeta_n}{\lambda_N-\zeta_n}.
\]
For $n$ or $N$ big enough, $\Re(\lambda_N+\zeta_n)
\asymp \Im(\lambda_N+\zeta_n)\asymp |\lambda_N+\zeta_n|\asymp 1$.
We thus have to consider the denominator. We observe that
by Lemma \ref{Pythag-Lemma}
\bea\label{estimp9}
 |\lambda_N-\zeta_n|=|1-\overline{\zeta_n}\lambda_N|
 \asymp (1-r_N)+\left|\frac{1}{n}-\frac{1}{N}\right|
 =\frac{1}{N}+\left|\frac{1}{n}-\frac{1}{N}\right|
 \asymp\left\{
 \begin{array}{ll}
 \frac{\dst 1}{\dst n}&\text{if }n\le N\\
 \frac{\dst 1}{\dst N}&\text{if }n > N\\ \end{array}
 \right.
\eea
As a consequence,
\[
 \alpha_n \frac{\lambda_N+\zeta_n}{\lambda_N-\zeta_n}\lra 0,
 \quad n\to\infty.
\] 
Again:
\[
 I_n(\lambda_N)\sim 1+\alpha_n\frac{\lambda_N+\zeta_n}{\lambda_N-\zeta_n}.
\]
Hence
\beqa
 1-\overline{I_n(\lambda_n)}I_n(\lambda_N)
 &\sim& 1-\left(1+\alpha_n\frac{\dst r_n+1}{\dst r_n-1}\right)
 \left(1+\alpha_n\frac{\lambda_N+\zeta_n}{\lambda_N-\zeta_n}\right)
 \sim \alpha_n\frac{\dst 1+r_n}{\dst 1-r_n}+
   \alpha_n\frac{\zeta_n+\lambda_N}{\zeta_n-\lambda_N}\\
 &=&\alpha_n\left(\frac{ 1+r_n}{\dst 1-r_n}+
    \frac{\zeta_n+\lambda_N}{\zeta_n-\lambda_N}\right)
 =\alpha_n\frac{ ( 1+r_n)(\zeta_n-\lambda_N) +
     (1-r_n)(\zeta_n+\lambda_N)}{(1-r_n)(\zeta_n-\lambda_N)}\\
 &=&2\alpha_n\frac{\zeta_n-r_n\lambda_N}{(1-r_n)(\zeta_n-\lambda_N)}
 =2\alpha_n\zeta_n\frac{1-\overline{\zeta_n}r_n\lambda_N}{(1-r_n)
   (\zeta_n-\lambda_N)}\\
 &=&2\alpha_n\zeta_n\frac{1-\overline{\lambda_n}\lambda_N}{(1-r_n)
   ({\zeta_n}-\lambda_N)}.
\eeqa
From here we have
\bea\label{simil}
 \frac{1-\overline{I_n(\lambda_n)}I_n(\lambda_N)}
 {1-\overline{\lambda_n}\lambda_N}
 \sim \frac{2\alpha_n\zeta_n}{(1-r_n)({\zeta_n}-\lambda_N)}
 =\frac{2}{n^{\eps}}\frac{\zeta_n}{{\zeta}_n-\lambda_N}.
\eea
We claim that at least for $n\ge 2N$,
\[
 \left|\frac{\zeta_n}{\zeta_n-\lambda_N}\right|
 \asymp \Re\frac{\zeta_n}{\zeta_n-\lambda_N}.
\] 
Indeed,
\[
 \frac{\zeta_n}{\zeta_n-\lambda_N}=\frac{1-\zeta_n
 \overline{\lambda}_N}{|\zeta_n-\lambda_N|^2},
\]
so that for the claim to hold it is sufficient to check that 
\[
 |1-\zeta_n \overline{\lambda}_N|\asymp \Re(1-\zeta_n\overline{\lambda}_N)
\]
for $n\ge 2N$. We have already seen in \eqref{estimp9} that
\[
 |1-\zeta_n \overline{\lambda}_N|\asymp \frac{1}{N},\quad n\ge 2N.
\] 
Now
\[
 \Re(1-\zeta_n \overline{\lambda}_N)=1-r_N\Re \left(e^{i(1/n-1/N)}\right)
 =1-\left(1-\frac{1}{N}\right)\left(\cos\left(\frac{1}{n}-\frac{1}{N}\right)\right)
 \asymp\frac{1}{N},\quad n\ge 2N,
\]
which proves the claim.
We thus can pass in \eqref{simil} to real parts so that for $n\ge 2N$
\beqa
 \Re  \left( \frac{1-\overline{I_n(\lambda_n)}I_n(\lambda_N)}
 {1-\overline{\lambda_n}\lambda_N}\right)
 &\sim& \Re\left(\frac{2}{n^{\eps}}\frac{\zeta_n}{{\zeta}_n-\lambda_N}\right)
 \sim \frac{2}{n^{\eps}}\Re\left( \frac{1-\zeta_n\overline{\lambda}_N}{|\zeta_n-\lambda_N|^2}
 \right)\\
 &\asymp& \frac{2}{n^{\eps}}\frac{1/N}{1/n^2+(1/n-1/N)^2}
 \asymp\frac{2}{n^{\eps}} \frac{ 1/N}{(1/N)^2}\\
 &\asymp& \frac{N}{n^{\eps}},\quad \text{when }n\ge 2N.
\eeqa

Hence
\beqa
 \Re f_{\beta}(\lambda_N)&\gtrsim& 
 \sum_{n \geq 1} \beta_n\frac{1}{n^{(1-\eps)/2}}\frac{\Re(1-\zeta_n\overline{\lambda}_N)}
 {|\zeta_n-\lambda_N|^2}
 \gtrsim N\sum_{n\ge 2N} \frac{\beta_n}{n^{(1+\eps)/2}}.
\eeqa
Pick for instance $\beta_n=n^{-(1+\eta)/2}$, where $\eta>0$ is arbitrary, 
so that obvioulsy $\beta_n\ge 0$ and $\beta\in \ell^2$.
Then
\[
 \Re f_{\beta}(\lambda_N)
 \gtrsim N\sum_{n\ge 2N}\frac{1}{n^{1+(\eps+\eta)/2}}
 \sim N\frac{1}{N^{(\eps+\eta)/2}}=N^{1-\eps/2-\eta/2}
 \asymp\left(\frac{1}{1-|\lambda_N|}\right)^{1-\eps/2-\eta/2}
\]
where $\eta>0$ is arbitrarily small. Compare this with the estimate
of the reproducing kernel \eqref{EstimRepKern}. With better choices of $\beta$ it
is of course clear that we can come closer to the maximal growth given
by the reproducing kernel.

Finally, we point out that when
$I(z)\lmto 1$ when $z\to 1$ in a fixed Stolz domain, 
 it is, in general, particularly 
difficult to decide whether or not a sequence of reproducing kernels for $(I H^2)^{\perp}$, with the parameter in a Stolz domain with vertex at 1, is an
unconditional basis or not. Even when $\sup_n|I(\lambda_n)|<1$,
there is a characterization known for unconditional basis which is, in general, difficult to check.

\bibliography{BBBII}

\end{document}